\documentclass[10pt]{amsart}
\usepackage{amssymb,amsthm,amsmath,amsfonts}
\usepackage{hyperref}
\usepackage{pstricks}
\usepackage{graphicx}
\usepackage{pst-node}

\textwidth=16cm
\oddsidemargin=0pt
\evensidemargin=0pt

\def\R{{\mathbb R}}
\def\Rr{{\mathcal R}}
\def\C{{\mathbb C}}

\def\Z{{\mathbb Z}}

\def\<{\langle}
\def\>{\rangle}

\def\E{{\mathbb E}}
\def\U{{\mathbb U}}
\def\V{{\mathbb V}}

\def\X{\mathbb X}

\def\Y{\mathbb Y}

\newcommand{\be}{\begin{equation}}
\newcommand{\ee}{\end{equation}}
      \newtheorem{theorem}{Theorem}[section]

       \newtheorem{lemma}[theorem]{Lemma}

\title{A constant regression characterization of a Marchenko-Pastur law}
\author[K. Szpojankowski]{Kamil Szpojankowski}
\address[K. Szpojankowski]{Wydzia\l{} Matematyki i Nauk Informacyjnych\\
Politechnika Warszawska\\
ul. Koszykowa 75\\
00-662 Warszawa, Poland}
\email{k.szpojankowski@mini.pw.edu.pl}

\subjclass[2010]{Primary: 46L54. Secondary: 62E10.}
\keywords{Lukacs characterization, free Poisson distribution, free cumulants}

\begin{document}
\begin{abstract}
Lukacs type characterization of Marchenko--Pastur distribution in free probability is studied here. We prove that for free $\mathbb{X}$ and $\mathbb{Y}$ when conditional moments of order $1$ and $-1$ of $(\mathbb{X+Y})^{-1/2}\mathbb{X}(\mathbb{X+Y})^{-1/2}$ given $\mathbb{X+Y}$ are constant then $\mathbb{X}$ and $\mathbb{Y}$ have Marchenko--Pastur distribution.
\end{abstract}
\maketitle
\section{Introduction}

Since the paper \cite{VoiculescuAdd} free probability theory has been developed in many various directions. It turns out that many classical results for independent random variables such as for example Central Limit Theorem have their free analogues. One of the deepest known relations between classical and free probability are so called Bercovici -- Pata bijections which give bijection between infinitely divisible distributions in free and classical convolution. 

In this paper we are interested in characterization problems in free probability. This seems to be another field which gives interesting connection between classical and free probability. Our result is a new example of known, but not completely well understood phenomena of analogies between characterizations in classical and free probability. A basic example of such analogy is the Bernstein's theorem which characterizes a Gaussian distribution by independence of $X+Y$ and $X-Y$ for independent $X$ and $Y$. In \cite{NicaChar} it is proved that similar result holds for the Wigner semicircle law when independence is replaced by freeness assumption. 

The main result of this paper is closely related to the Lukacs theorem which provides a characterization of a Gamma distribution by independence of $V=X+Y$ and $U=X/(X+Y)$ for independent $X$ and $Y$ (see \cite{Lukacs}). It is known that the assumption of independence of $U$ and $V$ can be replaced by a weaker assumption of constancy of regressions $\E\,\left(U|V\right)$ and $\E\,\left(U^2|V\right)$ (see \cite{LahaLukacs}). In \cite{WesolGamma} it is proved that constancy of regressions $\E\,\left(U|V\right)$ and $\E\,\left(U^{-1}|V\right)$ also characterizes a Gamma distribution.

The Lukacs property was also studied in a context of free probability in \cite{BoBr2006}, where Laha-Lukacs regression of free Meixner family is studied (see also \cite{EjsmontLL,EjsmontTD}). Theorem 3.1 from \cite{BoBr2006} contains as a special case a free analogue of Lukacs regressions in the case of constancy of regressions of $U$ and $U^2$ given by $\V$. It turns out that such conditions characterize the Marchenko--Pastur (free Poisson) distribution. The proof that the Marchenko--Pastur distributed $\X$ and $\Y$ have the property that $\V=\X+\Y$ and $\U=(\X+\Y)^{-1/2}\X(\X+\Y)^{-1/2}$ are free can be found in \cite{SzpLukTheo}. In this paper we prove a free analogue of the Lukacs regressions in the case of regressions $\U$ and $\U^{-1}$ given by $\V$. The proof of the main result relies mainly on the technique developed in our previous papers \cite{SzpojanWesol,SzpDLRNeg}.

The paper is organized as follows: in the Section 2 we briefly introduce basic notions of free probability and known facts which are needed to prove the main result. In Section 3 we state and prove the main result of the paper.

\section{Preliminaries}

In this section we give a collection of facts which we need in this paper. For more detailed introduction we refer to our previous papers \cite{SzpojanWesol,SzpDLRNeg}. A comprehensive introduction to free probability can be found in \cite{VoiDykNica} or \cite{NicaSpeicherLect}.

By a non-commutative probability space we understand a pair $\left(\mathcal{A},\varphi\right)$ where $\mathcal{A}$ is a unital algebra and $\varphi$ is a faithful, normal, tracial state.

For self-adjoint and free $\X$, $\Y$ with distributions $\mu$ and $\nu$, respectively, and $\X$ positive, that is the support of $\mu$ is a subset of $(0,\infty)$, free multiplicative convolution of $\mu$ and $\nu$ is defined as the distribution of $\sqrt{\X}\,\Y\sqrt{\X}$ and denoted by $\mu\boxtimes\nu$. Due to the tracial property of $\varphi$ the moments of $\Y\,\X$, $\X\,\Y$ and $\sqrt{\X}\,\Y\sqrt{\X}$ match.

Let $\chi=\{B_1,B_2,\ldots\}$ be a  partition of the set of numbers $\{1,\ldots,k\}$. A partition $\chi$ is a crossing partition if there exist distinct blocks $B_r,\,B_s\in\chi$ and numbers $i_1,i_2\in B_r$, $j_1,j_2\in B_s$ such that $i_1<j_1<i_2<j_2$. Otherwise $\chi$ is called a non-crossing partition. The set of all non-crossing partitions of $\{1,\ldots,k\}$ is denoted by $NC(k)$.

For any $k=1,2,\ldots$, (joint) cumulants of order $k$ of non-commutative random variables $\X_1,\ldots,\X_n$ are defined recursively as $k$-linear maps $\mathcal{R}_k:\C\,\langle x_i,\,i=1,\ldots,k\rangle\to\C$ through equations
$$
\varphi(\Y_1,\ldots,\Y_m)=\sum_{\chi\in NC(m)}\,\prod_{B\in\chi}\,\mathcal{R}_{|B|}(\Y_i,\,i\in B)
$$
holding for any $\Y_i\in\{\X_1,\ldots,\X_n\}$, $i=1,\ldots,m$, and any $m=1,2,\ldots$,
with $|B|$ denoting the number of elements in the block $B$.

Freeness can be characterized in terms of behavior of cumulants in the following way: Consider unital subalgebras $(\mathcal{A}_i)_{i\in I}$ of an algebra $\mathcal{A}$ in a non-commutative probability space $(\mathcal{A},\,\varphi)$. Subalgebras $(\mathcal{A}_i)_{i\in I}$ are freely independent iff for any $n=2,3,\ldots$ and for any $\X_j\in\mathcal{A}_{i(j)}$ with $i(j)\in I$, $j=1,\ldots,n$ any $n$-cumulant
$$
\mathcal{R}_n(\X_1,\ldots,\X_n)=0
$$
if there exists a pair $k,l\in\{1,\ldots,n\}$ such that $i(k)\ne i(l)$.

In sequel we will use the following formula from \cite{BozLeinSpeich} which connects cumulants and moments for non-commutative random variables
\be\label{BLS}
\varphi(\X_1\ldots\X_n)=\sum_{k=1}^n\,\sum_{1<i_2<\ldots<i_k\le n}\,\mathcal{R}_k(\X_1,\X_{i_2},\ldots,\X_{i_k})\,\prod_{j=1}^k\,\varphi(\X_{i_j+1}\ldots\X_{i_{j+1}-1})
\ee
with $i_1=1$ and $i_{k+1}=n+1$ (empty products are equal 1).

For free random variables $\X$ and $\Y$ having distributions $\mu$ and $\nu$, respectively, the distribution of $\X+\Y$, denoted by $\mu\boxplus\nu$, is called free convolution of $\mu$ and $\nu$.

Non-commutative conditional expectation is well defined in a so called $W^*$ probability spaces, i.e. non-commutative probability spaces where algebra $\mathcal{A}$ is a von Neumann algebra. Non-commutative conditional expectation has many properties analogous to those of classical conditional expectation. For more details one can consult e.g. \cite{Takesaki}. Here we state two of them we need in the sequel.
\begin{lemma}\label{conexp} Consider a $W^*$-probability space $(\mathcal{A},\varphi)$.
	\begin{itemize}
		\item If $\X\in\mathcal{A}$ and $\Y\in\mathcal{B}$, where $\mathcal{B}$ is a von Neumann subalgebra of $\mathcal{A}$, then
		\be\label{ce1}
		\varphi(\X\,\Y)=\varphi(\varphi(\X|\mathcal{B})\,\Y).
		\ee
		\item If $\X,\,\Z\in\mathcal{A}$ are free then
		\be\label{ce2}
		\varphi(\X|\Z)=\varphi(\X)\,\mathbb{I}.
		\ee
	\end{itemize}
\end{lemma}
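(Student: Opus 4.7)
The plan is to treat both parts as consequences of the standard construction of the conditional expectation $E_\mathcal{B} := \varphi(\cdot\,|\mathcal{B}) : \mathcal{A}\to\mathcal{B}$ in a $W^*$-probability space with faithful tracial state. Existence and uniqueness of such a $\varphi$-preserving normal projection onto a von Neumann subalgebra $\mathcal{B}$ can be quoted from Takesaki; the two claims then become, respectively, the \emph{defining} trace-duality identity and a computation using freeness.

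For the first part, I would simply point out that the map $E_\mathcal{B}$ can be defined as the unique element of $\mathcal{B}$ characterized by the relation
\begin{equation*}
\varphi(\X\Y) \;=\; \varphi(E_\mathcal{B}(\X)\,\Y) \qquad \text{for all } \Y\in\mathcal{B},
\end{equation*}
so (\ref{ce1}) is literally the construction. (Alternatively, one may exhibit $E_\mathcal{B}$ on the GNS level as the orthogonal projection from $L^2(\mathcal{A},\varphi)$ onto $L^2(\mathcal{B},\varphi)$, in which case the identity is the self-adjointness of that projection combined with the trace property of $\varphi$.)

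For the second part, set $\mathcal{B}:=W^*(\Z)\subset\mathcal{A}$. The goal is to show that the scalar $\varphi(\X)\,\I\in\mathcal{B}$ satisfies the characterization of $E_\mathcal{B}(\X)$ supplied by (\ref{ce1}); by uniqueness this will give (\ref{ce2}). To that end, I would take an arbitrary polynomial $p(\Z)\in\mathcal{B}$ and use the freeness of $\X$ and $\Z$: since $\X-\varphi(\X)\I$ and $p(\Z)-\varphi(p(\Z))\I$ are centered elements of free subalgebras, the defining vanishing of alternating products of centered free elements yields
\begin{equation*}
\varphi\!\left((\X-\varphi(\X)\I)\,(p(\Z)-\varphi(p(\Z))\I)\right)=0,
\end{equation*}
which after expansion gives $\varphi(\X\, p(\Z))=\varphi(\X)\,\varphi(p(\Z))=\varphi(\varphi(\X)\I\cdot p(\Z))$. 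Normality of $\varphi$ and weak-operator density of polynomials in $\Z$ inside $\mathcal{B}$ then extend this to every $\Y\in\mathcal{B}$, and the uniqueness from part (1) identifies $\varphi(\X\,|\,\Z)$ with $\varphi(\X)\,\I$.

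There is no real obstacle: the only point requiring care is the density/normality step in the second part, which justifies passing from polynomial test elements $p(\Z)$ to arbitrary $\Y\in W^*(\Z)$. Everything else is immediate from the definitions of conditional expectation and of freeness via cumulants (or, equivalently, via the mixed-moment vanishing rule).
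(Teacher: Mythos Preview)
Your argument is correct, but there is nothing to compare it against: the paper does not prove this lemma. It merely records the two properties as standard facts about non-commutative conditional expectation and refers the reader to \cite{Takesaki} for details. So what you have written supplies precisely the justification the paper chose to omit.

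On the substance: your treatment of (\ref{ce1}) as the defining trace-duality characterization of $E_\mathcal{B}$ (equivalently, the orthogonal projection in the GNS picture) is the standard way to set things up in the tracial case, and your derivation of (\ref{ce2}) from freeness via the vanishing of $\varphi\big((\X-\varphi(\X)\I)(p(\Z)-\varphi(p(\Z))\I)\big)$ together with uniqueness is clean and correct. The one delicate point you already flagged---passing from polynomials in $\Z$ to all of $W^*(\Z)$ by normality of $\varphi$ and weak density---is indeed the only place where an $\varepsilon$ of care is needed, and it goes through because both sides of the identity are $\sigma$-weakly continuous in $\Y$.
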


Now we introduce basic analytical tools used to deal with non-commutative random variables and their distributions.

For a non-commutative random variable $\X$ its $r$-transform is defined as
\begin{align*}
r_{\X}(z)=\sum_{n=0}^{\infty}\,\mathcal{R}_{n+1}(\X)\,z^n.
\end{align*}
In \cite{VoiculescuAdd} it is proved that $r$-transform of a random variable with compact support is analytic in a neighbourhood of zero.  From properties of cumulants it is immediate that for $\X$ and $\Y$ which are freely independent
\be\label{freeconv}
r_{\X+\Y}=r_{\X}+r_{\Y}.
\ee
If $\X$ has the distribution $\mu$, then often we will write $r_{\mu}$ instead $r_{\X}$.\\
The Cauchy transform of a probability measure $\nu$ is defined as
$$
G_{\nu}(z)=\int_{\R}\,\frac{\nu(dx)}{z-x},\qquad \Im(z)>0.
$$
Cauchy transforms and $r$-transforms are related by
\be\label{Crr}
G_{\nu}\left(r_{\nu}(z)+\frac{1}{z}\right)=z.
\ee
Finally we introduce moment generating function $M_{\X}$ of a random variable $\X$ by
\begin{align*}
M_{\X}(z)=\sum_{n=1}^{\infty}\,\varphi(\X^n)\,z^n.
\end{align*}
It is easy to see that
\begin{align}\label{MSr}
M_{\X}(z)=\frac{1}{z}G_{\X}\left(\frac{1}{z}\right).
\end{align}
We will need the following lemma proved in \cite{SzpDLRNeg}.
\begin{lemma}
	\label{lem_cum}
	Let $\V$ be compactly supported, invertible non-commutative random variable. Define $C_n=\mathcal{R}_{n}\left(\V^{-1},\V,\ldots,\V\right)$, and $C(z)=\sum_{i=1}^{\infty}C_iz^{i-1}$. Then for $z$ in neighbourhood of $0$ we have
	\be 
	\label{lem_1}
	C(z)=\frac{z+C_1}{1+zr(z)},
	\ee
	where $r(z)$ is $R$-transform of $\V$. In particular,
	\be 
	\label{lem_2}
	C_2=1-C_1\mathcal{R}_1(\V),\ \ C_n=-\sum_{i=1}^{n-1}C_i\mathcal{R}_{n-i}(\V),\, n\geq 2
	\ee
\end{lemma}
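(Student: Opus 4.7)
The plan is to expand the tracial identity $\varphi(\V^{-1}\V^n)=\varphi(\V^{n-1})$ via \eqref{BLS} and translate the resulting moment-cumulant relation into a functional equation for $C(z)$.

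First I would apply \eqref{BLS} to the left-hand side with $\X_1=\V^{-1}$ and $\X_2=\cdots=\X_{n+1}=\V$. Since the only cumulants arising have $\V^{-1}$ in the head position, each equals some $C_k$, while each gap factor $\varphi(\X_{i_j+1}\cdots \X_{i_{j+1}-1})$ is simply a moment of $\V$. A direct count shows that the gap exponents form nonnegative compositions of $n+1-k$ into $k$ parts, so the inner sum is exactly $[w^{n+1-k}]\,\tilde M(w)^k$, where $\tilde M(w)=\sum_{m\ge 0}\varphi(\V^m)w^m$. Thus for every $n\ge 1$,
\[
\varphi(\V^{n-1})=\sum_{k\ge 1}C_k\,[w^{n+1-k}]\,\tilde M(w)^k.
\]

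Multiplying by $w^{n+1}$ and summing over $n\ge 1$, after handling the absent $k=1$, $n=0$ term separately, I expect the whole identity to collapse to
\[
w\,\tilde M(w)+C_1=\tilde M(w)\,C\bigl(w\,\tilde M(w)\bigr).
\]
Setting $y=w\,\tilde M(w)$, the Laurent expansion of $G_\V$ at infinity gives $y=G_\V(1/w)$, hence $1/w=G_\V^{-1}(y)=r(y)+1/y$ by \eqref{Crr}. This yields $w=y/(1+y\,r(y))$; substituting back and solving for $C(y)$ produces the closed form \eqref{lem_1}. For \eqref{lem_2} I would clear denominators to $C(z)+z\,C(z)\,r(z)=z+C_1$ and match coefficients of $z^m$: the case $m=0$ is the tautology $C_1=C_1$, $m=1$ yields $C_2=1-C_1\mathcal{R}_1(\V)$, and $m\ge 2$ gives the recurrence $C_n=-\sum_{i=1}^{n-1}C_i\mathcal{R}_{n-i}(\V)$ for $n=m+1\ge 3$.

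The main obstacle I foresee is the first step: the BLS bookkeeping must be carried out carefully so that the gap compositions really line up with a coefficient of $\tilde M(w)^k$, and the boundary cases $k=1$ and $k=n+1$, where the admissible index tuples degenerate, must be treated correctly before the summation in $n$ is interchanged with the sum over $k$. Once the generating function identity $w\,\tilde M(w)+C_1=\tilde M(w)\,C(w\,\tilde M(w))$ is secured, the passage to $r$-transforms is purely algebraic.
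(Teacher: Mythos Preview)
Your argument is correct. Note, however, that the present paper does not actually prove Lemma~\ref{lem_cum}: it is quoted from \cite{SzpDLRNeg}, so there is no in-paper proof to compare against. That said, your approach---expanding $\varphi(\V^{-1}\V^{n})=\varphi(\V^{n-1})$ via \eqref{BLS}, summing into a generating-function identity, and inverting through \eqref{Crr}---is precisely the machinery the paper itself deploys in the proof of Theorem~\ref{DualLukacs1} to compute $B(z)$ and $D(z)$; it is therefore almost certainly the intended line, and matches what one finds in \cite{SzpDLRNeg}. One small remark: your coefficient matching correctly yields the recurrence only for $n\ge 3$, while the lemma as stated writes ``$n\ge 2$''; the $n=2$ case is the separate identity $C_2=1-C_1\mathcal{R}_1(\V)$, so the statement in the paper has a minor indexing slip that your derivation in fact clarifies.
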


\section{Main Result}

In this section we study a regressive characterization of the Marchenko-Pastur distribution which is a free counterpart of the characterization of the gamma distribution proved in \cite{WesolGamma}.

\begin{theorem}\label{DualLukacs1}
Let  $(\mathcal{A},\,\varphi)$ be $W^*$-probability space, let $\X,\,\Y$ be non-commutative random variables in $(\mathcal{A},\,\varphi)$. Assume that $\X$ and $\Y$ are free, $\X$ is strictly positive, $\Y$ is positive and there exist real numbers $c$ i $d$, such that
\begin{align}
\label{regL1}
\varphi\left(\left.\X\right|\X+\Y\right)=c\,\left(\X+\Y\right)
\end{align}
and
\begin{align}
\label{regL2}
\varphi\left(\left.\X^{-1}\right|\X+\Y\right)=d\,\left(\X+\Y\right)^{-1}.
\end{align}
Then $\X$ and $\Y$ have free Poisson distributions $\nu(c \lambda,\alpha)$ and $\nu((1-c)\lambda,\alpha)$ respectively, where $\lambda=\frac{d-1}{cd-1}$ and $\alpha=\frac{cd-1}{C_1(1-c)}$ for some $C_1>0$.
\end{theorem}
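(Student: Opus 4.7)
The plan is to convert the two conditional moment conditions into identities on the $R$-transform of $\V:=\X+\Y$, following the generating-function strategy of \cite{SzpojanWesol,SzpDLRNeg}. Testing \eqref{regL1} and \eqref{regL2} against the powers $\V^n$ via \eqref{ce1} gives, for every $n\ge 0$,
\[
\varphi(\X\V^n)=c\,\varphi(\V^{n+1}),\qquad \varphi(\X^{-1}\V^n)=d\,\varphi(\V^{n-1}).
\]
I will package each family in a single generating function in an auxiliary variable $w$ and pass to $R$-transform identities via the substitution $z=G_\V(1/w)$.

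For the first family I expand $\varphi(\X\V^n)$ by the BLS formula \eqref{BLS}, with $\X$ in position $1$ and $\V=\X+\Y$ in the remaining positions. Multilinearity of cumulants and the vanishing of mixed cumulants of free variables collapse each $\mathcal{R}_k(\X,\V,\ldots,\V)$ to $\mathcal{R}_k(\X)$, and the resulting non-crossing sum in $w$ packages as
\[
\sum_{n\ge 0}\varphi(\X\V^n)\,w^n=\tilde M(w)\,r_\X\!\bigl(w\tilde M(w)\bigr),
\]
where $\tilde M(w)=1+M_\V(w)$. The substitution $z=w\tilde M(w)=G_\V(1/w)$ — which by \eqref{Crr} and \eqref{MSr} gives $\tilde M(w)=1+zr_\V(z)$ — turns the first moment identity into an equation that cancels a common factor $1+zr_\V(z)$ and reduces to $r_\X(z)=c\,r_\V(z)$; hence by \eqref{freeconv} also $r_\Y(z)=(1-c)r_\V(z)$.

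For the second family the same BLS expansion applies with $\X^{-1}$ in position $1$. Since $\X$ is strictly positive, $\X^{-1}$ lies in the von Neumann algebra generated by $\X$, so freeness of $\X$ from $\Y$ transfers to $\X^{-1}$ and the mixed cumulants again collapse, this time to $D_k:=\mathcal{R}_k(\X^{-1},\X,\ldots,\X)$. I then invoke Lemma~\ref{lem_cum} with $\X$ in place of $\V$ to get $D(z)=(z+D_1)/(1+zr_\X(z))$, while the $n=0$ case of the second moment identity pins down $D_1=\varphi(\X^{-1})=d\,\varphi(\V^{-1})$. After the same change of variable the identity becomes
\[
\bigl(1+zr_\V(z)\bigr)\,\frac{z+D_1}{1+zr_\X(z)}=d\,\varphi(\V^{-1})+d\,z,
\]
into which the relation $r_\X=c\,r_\V$ from the first step substitutes to produce a \emph{linear} equation for $r_\V(z)$, solvable in closed form as $r_\V(z)=\lambda\alpha/(1-\alpha z)$ with $\lambda=(d-1)/(cd-1)$ and $\alpha=(cd-1)/[C_1(1-c)]$ for an appropriate positive constant $C_1$ (carried by $\varphi(\X^{-1})$).

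Recognizing this as the $R$-transform of the free Poisson law $\nu(\lambda,\alpha)$ identifies $\V$ as Marchenko--Pastur, and the linear relations $r_\X=c\,r_\V$ and $r_\Y=(1-c)r_\V$ then identify $\X\sim\nu(c\lambda,\alpha)$ and $\Y\sim\nu((1-c)\lambda,\alpha)$ by additivity of free cumulants. The main obstacle is the clean packaging of both moment recursions as generating-function identities on $\tilde M$ and on the cumulant series and the change of variable $z=G_\V(1/w)$ that collapses them to $R$-transform equations; once this framework is in place, the rest is a short algebraic manipulation combined with the identification of a rational $R$-transform with the free Poisson family.
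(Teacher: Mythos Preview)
Your proposal is correct and rests on the same machinery as the paper—testing the two conditional identities against $\V^n$, packaging via the BLS formula \eqref{BLS} into generating functions, and invoking Lemma~\ref{lem_cum}—but you organize the algebra differently. The paper keeps both identities expressed in the moment variable (through $A(z)$ and the auxiliary $h(z)=zA(z)r_\X(zA(z))$), solves the resulting system for $r_\X$ directly, then recovers $r_{\X+\Y}$ from a quadratic in $G_{\X+\Y}$ and obtains $r_\Y$ by subtraction. You instead perform the substitution $z=w\tilde M(w)=G_\V(1/w)$ immediately on the \emph{first} identity, which collapses it to $r_\X(z)=c\,r_\V(z)$ (and hence $r_\Y=(1-c)r_\V$); feeding this into the second identity then makes the equation for $r_\V$ genuinely linear, and one reads off the free Poisson $R$-transform for $\V$, $\X$, and $\Y$ in one stroke. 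This is a cleaner path: it isolates the content of \eqref{regL1} as the single relation $r_\X=c\,r_\V$ before touching \eqref{regL2}, and avoids the separate Cauchy-transform computation for $\X+\Y$. The paper's route, on the other hand, never needs to justify the change of variable $w\mapsto z$ as a local diffeomorphism and keeps everything at the level of formal power series until the very end. One small omission in your write-up: you should record, as the paper does, that positivity forces $c\in(0,1)$, freeness gives $d>1$, and Cauchy--Schwarz gives $cd>1$, so that the parameters $\lambda,\alpha$ are admissible for a free Poisson law.
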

\begin{proof}
Multiplying \eqref{regL1} and \eqref{regL2} by $(\X+\Y)^n$ and applying the state to both sides of the equations, we obtain for $n\geq 0$
\begin{align}
\label{regL12}
\varphi\left(\X\left(\X+\Y\right)^n\right)&=c\,\varphi\left((\X+\Y)^{n+1}\right),\\
\label{regL22}\varphi\left(\X^{-1}\left(\X+\Y\right)^n\right)&=d\,\varphi\left((\X+\Y)^{n-1}\right).
\end{align}
Let us define three sequences $(\alpha_n)_{n\ge -1}$, $(\beta_n)_{n\ge 0}$ and $(\delta_n)_{n\ge 0}$ as,
\begin{align*}
\alpha_n=\varphi\left(\left(\X+\Y\right)^n\right),\qquad
\beta_n=\varphi\left(\X\left(\X+\Y\right)^n\right)\qquad\mbox{and\qquad}\delta_n=\varphi\left(\X^{-1}\left(\X+\Y\right)^n\right).
\end{align*}
We can rewrite \eqref{regL12} and \eqref{regL22}
as
\begin{align}
\label{regL13}\beta_n=c\,\alpha_{n+1},\\
\label{regL23}\delta_n=d\,\alpha_{n-1}.
\end{align}
Multiplying both sides of the above equations by $z^n$ and summing over $n=0,1,\ldots$ we get
\begin{align}
\label{regL14}B(z)&=c\,\frac{1}{z}\left(A(z)-1\right),\\
\label{regL24}D(z)&=d\,z\left(A(z)+\frac{\alpha_{-1}}{z}\right),
\end{align}
where
\begin{align*}
A(z)=\sum_{n=0}^\infty\alpha_nz^n,\qquad
B(z)=\sum_{n=0}^\infty\beta_nz^n,\qquad
D(z)=\sum_{n=0}^\infty\delta_nz^n.
\end{align*}
Using the formula \eqref{BLS} and freeness of $\X$ and $\Y$, for sequence $\alpha_n$ we get
\begin{align*}
\beta_n=&\Rr_1\alpha_n\\
+&\Rr_2\left(\alpha_{n-1}+\alpha_{n-2}\alpha_1+\ldots+\alpha_{n-1}\right)\\
+&\ldots+\Rr_{n+1}.
\end{align*}
where $\Rr_n=\Rr_n\left(\X\right).$\\
For $n\geq 0$ this gives us
\begin{align*}
\beta_n=\sum_{k=1}^{n+1}\Rr_k\sum_{i_1+\ldots+i_k=n+1-k}\alpha_{i_1}\ldots\alpha_{i_k}.
\end{align*}
Using the above equations we get
\begin{align*}
B(z)&=\sum_{n=0}^{\infty}z^n\beta_n=\sum_{n=0}^{\infty}z^n\sum_{k=1}^{n+1}\Rr_k\sum_{i_1+\ldots+i_k=n+1-k}\alpha_{i_1}\ldots\alpha_{i_k}\\
&=\sum_{k=1}^{\infty}z^{k-1}\Rr_k\sum_{n=k-1}^{\infty}\sum_{i_1+\ldots+i_k=n+1-k}\alpha_{i_1}z^{i_1}\ldots\alpha_{i_k}z^{i_k}\\
&=\sum_{k=1}^{\infty}z^{k-1}\Rr_k\sum_{m=0}^{\infty}\sum_{i_1+\ldots+i_k=m}\alpha_{i_1}z^{i_1}\ldots\alpha_{i_k}z^{i_k}.\\
\end{align*}
Which means that
\begin{align}
\label{RLuk_fA}
B(z)=A(z)r_{\X}(zA(z)),
\end{align}
where $r_{\X}(z)=\sum_{n=0}^{\infty}\Rr_{n+1} z^n$. Note that $r_{\X}$ is the $r$-transform of $\X$.

Next we proceed similarly with the sequence $\delta_n$
\begin{align*}
\delta_n=&C_1\alpha_n\\
+&C_2\left(\alpha_{n-1}+\alpha_{n-2}\alpha_1+\ldots+\alpha_{n-1}\right)\\
+&\ldots+C_{n+1},
\end{align*}
where $C_n=\Rr_n\left(\X^{-1},\underbrace{\X,\ldots,\X}_{n-1}\right),$ dla $n\geq 0$.\\
Which means that for $n\geq 0$ we have
\begin{align*}
\delta_n=\sum_{k=1}^{n+1}C_k\sum_{i_1+\ldots+i_k=n+1-k}\alpha_{i_1}\ldots\alpha_{i_k}
\end{align*}
The above equation gives us
\begin{align*}
D(z)&=\sum_{n=0}^{\infty}z^n\delta_n=\sum_{n=0}^{\infty}z^n\sum_{k=1}^{n+1}C_k\sum_{i_1+\ldots+i_k=n+1-k}\alpha_{i_1}\ldots\alpha_{i_k}\\
&=\sum_{k=1}^{\infty}z^{k-1}C_k\sum_{n=k-1}^{\infty}\sum_{i_1+\ldots+i_k=n+1-k}\alpha_{i_1}z^{i_1}\ldots\alpha_{i_k}z^{i_k}\\
&=\sum_{k=1}^{\infty}z^{k-1}C_k\sum_{m=0}^{\infty}\sum_{i_1+\ldots+i_k=m}\alpha_{i_1}z^{i_1}\ldots\alpha_{i_k}z^{i_k}.\\
\end{align*}
This implies that
\begin{align*}
D(z)=A(z)C(zA(z)),
\end{align*}
where $C(z)=\sum_{n=0}^{\infty}C_{n+1} z^n$.
Using lemma \ref{lem_cum} we get
\begin{align}
\label{RLuk_fD}
D(z)=A(z)\frac{zA(z)+C_1}{1+zA(z)r_{\X}(zA(z))}.
\end{align}
Using the equations \eqref{RLuk_fA} and \eqref{RLuk_fD},  we can rewrite \eqref{regL14} and \eqref{regL24} as
\begin{align*}
A(z)r_{\X}(zA(z))&=c\,\frac{1}{z}\left(A(z)-1\right),\\
A(z)\frac{zA(z)+C_1}{1+zA(z)r_{\X}(zA(z))}&=d\,z\left(A(z)+\frac{\alpha_{-1}}{z}{}\right).
\end{align*}
Let us define an auxiliary function $h(z)=zA(z)r_{\X}(zA(z))$, then we can rewrite the above equations as
\begin{align}
\label{regL15}
h(z)&=c\,(A(z)-1),\\
\label{regL25}
A(z)\frac{zA(z)+C_1}{1+h(z)}&=d\,z\left(A(z)+\frac{\alpha_{-1}}{z}{}\right).
\end{align}
Since $h(0)=0$, then in some neighbourhood of zero we can multiply \eqref{regL25} by $1+h$. Taking into account that the equation \eqref{regL2} implies $C_1=d\,\alpha_{-1}$, we get
\begin{align*}
zA^2(z)+A(z)C_1-zA(z)d\,(1+h(z))-C_1(1+h(z))=0.
\end{align*}
In the above equation we can replace one function $A$ in first and second term by $\frac{h+c}{c}$ which follows from  \eqref{regL15}. After simple transformations we obtain
\begin{align}
\label{wzor_fh}
\frac{h(z)}{zA(z)}=\frac{c(d-1)}{C_1(1-c)-zA(z)(cd-1)}.
\end{align}
Recall that  $h(z)=zA(z)r_{\X}\left(zA(z)\right)$. Since $r$ is analytic in a neighbourhood of 0 and $\lim_{z\to 0}zA(z)=0$ we get
\begin{align}
\label{rtr_X}
r_{\X}(z)=\frac{c(d-1)}{C_1(1-c)-z(cd-1)}.
\end{align}
From the equation \eqref{regL1}, and the assumption that $\X$ and $\Y$ are positive we get $$c=\varphi(\X)/\varphi\left(\X+\Y\right)\in(0,1).$$
Similarly freeness of $\X$ and $\Y$ gives us
$$d=\varphi\left(\X^{-1}(\X+\Y)\right)=1+\varphi\left(\X^{-1}\right)\varphi(\Y)>1.$$
The Cauchy-Schwartz inequality implies $cd>1$.\\
This means that $\X$ has free Poisson distribution with parameters $\lambda=\frac{c(d-1)}{cd-1}$ and $\alpha=\frac{cd-1}{C_1(1-c)}$.

Next we shall determine the distribution of $\Y$.\\
Substituting in equation \eqref{regL15} $h$ from \eqref{wzor_fh}, we get
\begin{align*}
A^2(z) z (cd-1)+A(z)(zd(1-c)-C_1(1-c))+C_1(1-c)=0.
\end{align*}
Since $A$ is the moment transform of $\X+\Y$, we can use the connection between moment and Cauchy transforms, and after substituting $z:=1/z$ we obtain
\begin{align*}
G_{\X+\Y}^2(z) z (cd-1)+G_{\X+\Y}(z)d(1-c)-G_{\X+\Y}(z) z C_1(1-c)+C_1(1-c)=0.
\end{align*}
Now using the equation \eqref{Crr} we get the $r$-transform of $\X+\Y$
\begin{align*}
r_{\X+\Y}(z)=\frac{d-1}{C_1(1-c)-(cd-1)z}.
\end{align*}
Using \eqref{freeconv} we get
\begin{align*}
r_{\Y}(z)=\frac{(1-c)(d-1)}{C_1(1-c)-(cd-1)z}
\end{align*}
Which means that $\Y$ has the free Poisson distribution with parameters $\lambda=\frac{(1-c)(d-1)}{cd-1}$ and $\alpha=\frac{cd-1}{C_1(1-c)}$.
\end{proof}

\subsection*{Acknowledgement} The author thanks J. Weso\l{}owski for many helpful comments and discussions. This research was partially supported by NCN
grant 2012/05/B/ST1/00554.
\bibliographystyle{plain}
\bibliography{Bibl}
\end{document}